\documentclass[12pt]{amsart}
\pagestyle{plain}
\usepackage{fullpage, bbm}
\usepackage{amsfonts}
\usepackage{amssymb}
\usepackage{amsmath}
\usepackage{amsthm}
\usepackage{latexsym}

\newtheorem{theorem}{Theorem}[section]

\newtheorem{lemma}[theorem]{Lemma}
\newtheorem{conjecture}[theorem]{Conjecture}

\newcommand{\one}{\mathbbm{1} }
\newcommand{\RR}{\mathbb{R}}
\newcommand{\sinc}{\mathrm{sinc}}

\begin{document}

\author{Michael R. Tehranchi \\
University of Cambridge }
\address{Statistical Laboratory\\
Centre for Mathematical Sciences\\
Wilberforce Road\\
Cambridge CB3 0WB\\
UK}
\email{m.tehranchi@statslab.cam.ac.uk}

\title{Inequalities for the Gaussian measure of convex sets}
\date{\today}
\thanks{\textit{Key words or phrases.}   Gaussian measure, correlation inequality, log-concavity}
\thanks{\textit{2010 AMS subject classification} 60E15,  28C20.}

\begin{abstract}
This note presents families of inequalities for the Gaussian measure of convex sets
which extend the recently proven Gaussian correlation inequality in various directions. 
\end{abstract}

\maketitle

\section{Introduction and statement of results}
Let $\gamma$ be the standard Gaussian on $\RR^n$,  
defined by
$$
\gamma(K) = \int_K  \tfrac{ 1}{(2\pi)^{n/2}} e^{- \tfrac{1}{2} \|x \|^2 } dx
$$
for   Lebesgue  measurable $K \subseteq \RR^n$.  

Recently Royen \cite{royen} proved  that 
\begin{equation}\label{eq:gci}
 \gamma(A) \gamma(B)   \le \gamma( A \cap B)
\end{equation}
for all dimensions $n$ and all symmetric convex sets $A, B \subseteq \RR^n$.
The Gaussian correlation inequality \eqref{eq:gci} was previously known as the Gaussian correlation conjecture and was
an open problem for over 50 years.  See
  the paper of Lata\l{}a \& Matlak \cite{LM}  for a discussion of Royen's proof.  

The purpose of this note is to offer evidence in support of the following strengthening of 
inequality \eqref{eq:gci}.  We will use the notation  
$$
A+B = \{a+b: a \in A, b \in B\}
$$
for the Minkowski sum of two sets.

\begin{conjecture}\label{conj}
The inequality 
\begin{equation}\label{eq:conj}
 \gamma(A) \gamma(B)   \le \gamma( A \cap B) \gamma(A+B)
\end{equation}
holds for all dimensions $n$ and all symmetric convex sets $A, B \subseteq \RR^n$.
\end{conjecture}

It is obvious that inequality \eqref{eq:conj} holds in dimension $n=1$. More
generally, the inequality holds whenever $A \subseteq B$, since in this case $A = A \cap B$
and $B \subseteq A+B$.
Also note that inequality \eqref{eq:conj} holds with equality for 
any dimension $n > 1$ whenever there is a dimension $m < n $ and   sets $\tilde A \subseteq \RR^m$
and $\tilde B \subseteq \RR^{n-m}$ such that $A = \tilde A \times \RR^{n-m}$ and $B =
\RR^{m} \times \tilde B$, since in this case $A \cap B = \tilde A \times \tilde B$
and $A+B = \RR^n$.

 Dar \cite{dar} proved that the similar-looking inequality
\begin{equation}\label{eq:dar}
\mathrm{Leb}(A) \mathrm{Leb}(B) \le \mathrm{Leb}( A \cap B) \mathrm{Leb}(A + B)
\end{equation}
holds for all symmetric convex sets $A, B \subseteq \RR^n$, where
 $\mathrm{Leb}$ is the Lebesgue measure on $\RR^n$.  Since we have the inequality
$$
 \tfrac{ 1}{(2\pi)^{n/2}} e^{-\tfrac{1}{2} r_K^2} \ \mathrm{Leb}(K) \le \gamma(K) \le \tfrac{ 1}{(2\pi)^{n/2}}   \mathrm{Leb}(K) 
$$
for all bounded measurable $K \subset \RR^n$, where $r_K = \sup\{ \| x \|: x \in K \}$ is the
radius of the smallest ball containing $K$, inequality \eqref{eq:dar} implies
\begin{equation}\label{eq:dar2}
\gamma(A) \gamma(B) \le \gamma( A \cap B) \gamma(A + B) e^{ \tfrac{1}{2}(r_A+r_B)^2  + \tfrac{1}{2}(r_A \wedge r_B)^2   }.
\end{equation}
Inequality \eqref{eq:dar2} does not prove Conjecture \ref{conj}, but it does indicate
that the conjecture is plausible.
Furthermore, even if Conjecture \ref{conj} turns out not to be true, inequality \eqref{eq:dar2}
shows that the correlation inequality \eqref{eq:gci} can be improved when $A$ and $B$ are
contained in a sufficiently small ball.  
Indeed, the right-hand side of inequality \eqref{eq:dar2} is smaller than the right-hand side
of the correlation inequality \eqref{eq:gci} when $r_A$ and $r_B$ are sufficiently small.

Schechtman, Schlumprecht \& Zinn \cite[Proposition~3]{SSZ} proved
the related inequality that 
\begin{equation}\label{eq:SSZ-ineq}
\gamma(   A) \gamma(  B) \le \gamma\big( \sqrt{2} (A \cap B) \big) \  \gamma\big( \tfrac{1}{\sqrt{2}}(A+B) \big)
\end{equation} 
for symmetric convex $A,B \subseteq \RR^n$. 
Using the fact that the map
$$
t \mapsto t^{-n} \gamma(t K) = \int_{K}    \tfrac{ 1}{(2\pi)^{n/2}} e^{- \tfrac{t^2}{2} \|x \|^2 } dx
$$
is decreasing for any measurable $K \subseteq \RR^n$, inequality
\eqref{eq:SSZ-ineq}   implies
\begin{equation}\label{eq:SSZ-2}
  \gamma(A) \gamma(B) \le 2^{n/2} \gamma(A \cap B)    \gamma\big( \tfrac{1}{\sqrt{2}}(A+B) \big)
\end{equation}
as was observed by Schechtman, Schlumprecht \& Zinn.   Note that since $\tfrac{1}{\sqrt{2}} < 1$, 
the right-hand side
of inequality \eqref{eq:SSZ-2} is larger than the right-hand side of the  conjectural 
inequality \eqref{eq:conj}.   Also note that replacing $A$ and $B$ with $tA$ and $tB$
and sending $t \downarrow 0$ in either inequality \eqref{eq:SSZ-ineq} or \eqref{eq:SSZ-2}
recovers inequality \eqref{eq:dar}.
 
The  new result of this paper is the following:
\begin{theorem}\label{th:main}
The inequality 
\begin{equation}\label{eq:mine}
 \gamma(A) \gamma(B)   \le  \left( 1-s   \right)^{-n/2} 
\gamma \left( \sqrt{\tfrac{2(1-s )}{1+t}} (A \cap B) \right)  \gamma \left( \sqrt{ \tfrac{1-s }{2(1-t)}}  (A + B) \right) 
\end{equation}
holds for all dimensions $n$ and all symmetric convex sets $A, B \subseteq \RR^n$ and 
all $ \sqrt{s} \le t < 1$.
\end{theorem}
The proof of Theorem \ref{th:main} uses a stronger form of the Gaussian correlation inequality \eqref{eq:gci} 
which already appears in Royen's paper,  
as well as ideas appearing in the papers of  Shao~\cite{shao} and 
Schechtman, Schlumprecht \& Zinn.   We present the proof in the next section.

Note that setting $s=0$ in inequality \eqref{eq:mine} yields the dimension-independent
family of inequalities
$$
 \gamma(A) \gamma(B)   \le   
\gamma \left( \sqrt{\tfrac{2}{1+t}} (A \cap B) \right)  \gamma \left( \tfrac{1}{\sqrt{2(1-t)}} (A + B) \right) 
$$
which holds for all $0 \le t < 1$.  This family  interpolates
between Schechtman, Schlumprecht \& Zinn's inequality
\eqref{eq:SSZ-ineq} corresponding to $t=0$ and Royen's inequality \eqref{eq:gci} corresponding to the 
limit $t \uparrow 1$.  Setting $t=1/2$ yields
\begin{equation}\label{eq:mine2.1}
 \gamma(A) \gamma(B)   \le   
\gamma \left( \tfrac{2}{\sqrt{3}} (A \cap B) \right)  \gamma \left( A + B  \right) 
\end{equation} 
Note that since $\frac{2}{\sqrt{3}} > 1$, the right-hand side of inequality \eqref{eq:mine2.1}
is larger than the right-hand side of the conjectural inequality  \eqref{eq:conj}.

Note that by setting $s=  \tfrac{1}{2}(1-t) $ for $1/2 \le t < 1$  in inequality \eqref{eq:mine}, we have the family of inequalities
$$
 \gamma(A) \gamma(B)   \le    \left(\tfrac{2}{1+t} \right)^{n/2} 
\gamma \left( A \cap B  \right)  \gamma \left( \sqrt{\tfrac{1+t}{4(1-t)}} (A + B) \right).
$$
 Again, the limit $t \uparrow 1$ recovers inequality \eqref{eq:gci}.  
Setting $t = 1/2$ yields 
\begin{equation}\label{eq:mine4}
 \gamma(A) \gamma(B)   \le    \left(\tfrac{4}{3} \right)^{n/2} 
\gamma \left( A \cap B  \right)  \gamma \left(  \tfrac{\sqrt{3}}{2} (A + B) \right).
\end{equation}
Note that since $\tfrac{1}{\sqrt{2}} < \tfrac{\sqrt{3}}{2} < 1 $ the right-hand side of inequality \eqref{eq:mine4}   
is larger than the right-hand side of the conjectural inequality  \eqref{eq:conj}, but 
it is smaller than the right-hand side of inequality \eqref{eq:SSZ-2},
and therefore improving on the result of Schechtman, Schlumprecht \& Zinn.  Finally, setting $t = 3/5$  yields
$$
 \gamma(A) \gamma(B)   \le    \left(\tfrac{5}{4} \right)^{n/2} 
\gamma \left( A \cap B  \right)  \gamma \left(  A + B  \right)
$$
which improves upon inequality \eqref{eq:dar2} when either $A$ or $B$ is  unbounded.

Finally, note that by setting $s =  2t-1 $ for 
$1/2 \le t < 1$ in inequality \eqref{eq:mine}, we have the family of inequalities
$$
 \gamma(A) \gamma(B)   \le    [2 (1-t)]^{-n/2} \gamma \left(\sqrt{\tfrac{4(1-t)}{1+t} } (A \cap B)  \right)
   \gamma \left(    A + B  \right).
$$%
Sending $t \uparrow 1$ yields
\begin{align*}
 \gamma(A) \gamma(B)   &\le   \tfrac{1}{(2\pi)^{n/2}} \mathrm{Leb} \left( A \cap B  \right)  \gamma \left(    A + B  \right) \\
&\le\gamma \left( A \cap B  \right)  \gamma \left(  A + B  \right) e^{ \tfrac{1}{2}(r_A \wedge r_B)^2  }
\end{align*}
 which again improves upon inequality \eqref{eq:dar2} when either $A$ or $B$ is  bounded.

\section{The proof}\label{se:proofs}
Fix the dimension $n$, and for $0 \le t \le 1$
 let $\gamma_t$ denote the measure on $\RR^n \times \RR^n$ 
which interpolates between  $\gamma_0(K \times L) = \gamma(K) \gamma(L)$ and
$\gamma_1(K \times L) = \gamma(K \cap L)$ given explicitly by the formula
$$
\gamma_t (H)  = \int_H \tfrac{1}{(1-t^2)^{n/2}(2\pi)^{n}} 
e^{-\tfrac{1}{2(1-t^2)}(\|x\|^2 - 2 t \langle x,  y \rangle + \|y\|^2 ) } dx \  dy 
$$
for measurable $H \subseteq \RR^{n} \times \RR^{n}$,
 where $\langle x, y \rangle   = x_1y_1 + \ldots + x_n y_n$ denotes the standard
inner product on $\RR^n$.   We will need a few observations about the measure
$\gamma_t$.

\vskip .25cm

\noindent \textit{ Observation 1.} Fix a measurable set $H \in \RR^n \times \RR^n$ 
with the symmetry property that  $(x,y) \in H$ implies $(x,-y) \in H$.  Let
\begin{align*}
f(t) &=  (1-t^2)^{-n/2} \gamma_t\big( \sqrt{1-t^2} H \big) \\
 & =  \int_{H} \tfrac{1}{(2\pi)^n} e^{ - \tfrac{1}{2} (\|x\|^2  - 2 t \langle x,  y \rangle + \|y\|^2) } dx \ dy \\
 & =  \int_{H} \tfrac{1}{(2\pi)^n} e^{ - \tfrac{1}{2} (\|x\|^2 + \|y\|^2) } \cosh( t \langle x,  y \rangle) dx \ dy 
\end{align*}
 where we have used the symmetry property of $H$ to go from the second to third line.  Hence, we 
have the identity 
$$
f'(t) = \int_{H} \tfrac{1}{(2\pi)^n} e^{ - \tfrac{1}{2} (\|x\|^2 + \|y\|^2) } 
\langle x,  y \rangle \sinh( t \langle x,  y \rangle)  \ dx \ dy.
$$
Since $\theta  \sinh \theta \ge 0$ for all real $\theta$, the function $f$ is increasing.   A variation of this argument also appears in the paper
of  Shao~\cite[Theorem~1.1]{shao}.

\vskip .25cm

\noindent \textit{Observation 2.} Inspection of Royen's proof \cite[equation (2.3)]{royen} of the Gaussian correlation inequality \eqref{eq:gci}
shows that the map
$$
t \mapsto \gamma_t(A \times B)  
$$ 
is increasing on $[0,1]$ for all symmetric convex $A,B \subseteq \RR^n$.  This monotonicity
 property was already known for the special case of dimension $n=2$ by the result of Pitt \cite[Theorem 3]{pitt}.
In Appendix \ref{ap:sinc} we provide an interesting reformulation of this monotonicity
property in terms of the function $\sinc \ x = \frac{\sin x}{x}$.

\vskip .25cm

\noindent \textit{Observation 3.1.}  Note the standard fact about Gaussian measure that
$$
\gamma_t( K \times L ) =  \gamma_0 \left\{ (x,y):  \sqrt{\tfrac{1+t}{2}}
 x +  \sqrt{\tfrac{1-t}{2}} y \in K, \sqrt{\tfrac{1+t}{2}}x - \sqrt{\tfrac{1-t}{2}} y  \in L \right\}.
$$

\vskip .25cm

\noindent \textit{Observation 3.2.}  Fix symmetric convex $A, B \subseteq \RR^n$ and real constant $p$
and 
note that 
$$\gamma_0 \{ (x,y):  x +  p y \in A, x - py  \in B \}=   \int_{\RR^n} h(y)  d\gamma(y)
$$
 where
 $$
 h(y) = \gamma\left\{ (  A - py) \cap (  B + p y)  \right\}.
 $$
 The function $h$ is log-concave by the log-concavity of the Gaussian density, the assumed convexity of $A$ and $B$ and
Pr\'ekopa's theorem  \cite[Theorem 6]{prekopa}.  For completeness a statement of this important result is included in the appendix.  
Since $h$ is even by the assumed
 symmetry of $A$ and $B$ we have
 $$
 h(y) \le h(0)  = \gamma(A \cap B).
 $$
 for all $y \in \RR^n$.  Furthermore, $h(y) > 0$ only when 
 $$
(  A - py) \cap (  B + p y)\ne \emptyset,
 $$
 that is, when there exist points $a \in A$ and $b \in B$
 such that
 $$
a - p y =  b  +  p   y  
 $$
 and hence 
 $$
 y  = \tfrac{1}{2p} (a-b) \in \tfrac{1}{2p} (A+B),
 $$
 again by the symmetry of $B$.   
Therefore
\begin{align*}
 \int_{\RR^n}  h(y) d\gamma (y) &\le \int_{y: h(y)>0}  h(0) d\gamma  (y) \\
 & \le \gamma  ( A \cap B) \ \gamma \big( \tfrac{1}{2p} (A+B) \big).   
 \end{align*}

\vskip .25cm

\noindent \textit{Observation 3.3.}  Fix $0 \le t < 1$ and symmetric convex $A,B$.
Combining Observations 3.1 and 3.2 yields
\begin{align*}
\gamma_t(A \times B) &= \gamma_0 
\left\{ (x,y):  
 x +  \sqrt{\tfrac{1-t}{1+t}} y \in \sqrt{\tfrac{2}{1+t} } A, x - \sqrt{\tfrac{1-t}{1+t}} y  \in \sqrt{\tfrac{2}{1+t}} B \right\} \\
& \le \gamma \left( \sqrt{\tfrac{2}{1+t}} (A \cap B) \right)  \gamma \left( \tfrac{1}{\sqrt{2(1-t)} }(A + B) \right) 
\end{align*}
The idea to use the elementary Observation 3.1 and the more sophisticated Observation 3.2 to bound the Gaussian
measure of an intersection was taken from Schechtman, Schlumprecht \& Zinn \cite[Proposition~3]{SSZ}.
In fact,   Dar \cite[Observation (4)]{dar}  also employed the analogue of Observation 3.2 to bound 
the Lebesgue measure of an intersection, and indeed this type of argument seems to have  
originated in the paper of Rogers \& Shephard \cite{RS}.

 \vskip .5cm

To prove Theorem \ref{th:main},  fix $  \sqrt{s} \le t <1$ and 
convex symmetric sets $A,B$.  We have the following series
of inequalities:
  \begin{align*}
   \gamma (A) \gamma (B) & = \gamma_0 (A \times B) \\
	& \le (1-s )^{-n/2}  \gamma_{\sqrt{s}} \big( \sqrt{1-s } (A \times B) \big) \\
		& \le (1-s )^{-n/2}  \gamma_t \big( \sqrt{1-s } (A \times B) \big) \\
	& \le (1-s )^{-n/2} \gamma \left( \sqrt{\tfrac{2(1-s )}{1+t}} (A \cap B) \right)  
	\gamma \left( \sqrt{\tfrac{1-s }{2(1-t)} }(A + B) \right) 
\end{align*}
as desired. \qed

\section{Appendix: A $\sinc$ reformulation}\label{ap:sinc}
In this appendix, we provide an interesting equivalent reformulation of Royen's 
result that the map $t \mapsto \gamma_t(A \times B)$ is increasing
for symmetric convex set $A$ and $B$, where the interpolation measure $\gamma_t$ is 
defined in section \ref{se:proofs}.

We will use the notation $\sinc : \RR^n \to \RR$ defined by
$$
\sinc(x_1, \ldots, x_n) = \prod_{i=1}^n \frac{ \sin x_i}{x_i}.
$$

\begin{theorem}\label{th:sinc}
For all $0 \le t < 1$ and $n \times n$ matrices $P$ and $Q$ we have
$$
\int \sinc( Px) \sinc(Q y) \langle x, y \rangle \sinh( t \langle x, y \rangle )   d \gamma (x) d \gamma (y) \ge 0.
$$
\end{theorem}

To prove Theorem \ref{th:sinc}, we will need a lemma about Gaussian Fourier transforms. 
 We note that the idea to study Gaussian (and more general) correlation
inequalities via Fourier analysis has appeared in the paper of Koldobsky \& Montgomery-Smith~\cite{KM}.  We need some notation.
For integrable $f:\RR^n \to \mathbb{C}$ define its Fourier transforms $\hat{f}:\RR^n \to \mathbb{C} $ by
$$
\hat{f}(u) = \tfrac{1}{(2\pi)^{n/2}} \int e^{\mathrm{i} \langle u, x \rangle } f(x) dx
$$
as usual.

\begin{lemma}\label{th:plancherel}  If $f$ and $g$ are integrable then
$$
\int f(x) g(y) d\gamma_{t}(x,y) = \int \hat{f}(u) \hat{g}(v) e^{-t \langle u, v \rangle } d \gamma(u) d \gamma(v)
$$
for all $0 \le t < 1$.
\end{lemma}

\begin{proof}
This is essentially an application of Plancheral's identity.
The proof amounts to writing $\hat f$ and $\hat g$ in terms of their respective
Fourier integrals, and since $f$ and $g$ are assumed integrable, Fubini's
theorem can be applied. The result is a consequence of the well-known formula
$$
\int e^{\mathrm{i}(  \langle u, x \rangle + \langle v, y \rangle)- t \langle u, v \rangle} d \gamma(u) d \gamma(v)
 = 
\tfrac{1}{(1-t^2)^{n/2}} 
e^{-\frac{1}{2(1-t^2)}(\|x\|^2 - 2 t \langle x, y \rangle + \|y\|^2 )}.
$$
\end{proof}

\begin{lemma}\label{th:fourier}
Let $C = [-1,1]^n = \{ x \in \RR^n, \max_i |x_i| \le 1 \}$ and set
 $A = P^\top C$ and $B = Q^\top C$ for $n\times n$ matrices $P$ and $Q$.
Then
$$
\gamma_t (A \cap B)  =  |\det(P) \det(Q)|   \int \sinc( Px) \sinc(Q y) \cosh(
  t\ \langle x, y \rangle)   d \gamma (x) d \gamma (y).
$$
\end{lemma}

\begin{proof}
Note that $\widehat{ \one_C }(s) = \sinc(s)$ and hence $\widehat{ \one_{P^\top C}}(s) = |\det P| \sinc(P s)$.
By Lemma \ref{th:plancherel}, we have
\begin{align*}
\gamma_t(A \times B) &=  |\det(P) \det(Q)| \int \sinc( Px) \sinc(Q y) e^{-t \langle x, y \rangle} d \gamma(x) d \gamma(y).
\end{align*}
The result follows since $C$ is symmetric and $\sinc$ is even. 
\end{proof}

The proof of Theorem \ref{th:sinc} follows from differentiating the expression in Lemma \ref{th:fourier}
and applying Royen's result on the monotonicity of $t \mapsto \gamma_t(A \times B)$.  
Notice that since   convex sets can be 
approximated by polyhedra, Theorem \ref{th:sinc} is in fact 
  equivalent to Royen's monotonicity result.

 \section{Appendix: Log-concave functions}\label{ap:prekopa} 
 In this appendix we recall some familiar notions involving log-concavity.  A non-negative
 function $g$ on $\RR^n$ is called log-concave if
 $$
 g( \theta x + (1-\theta) y) \ge g(x)^{\theta} g(y)^{1-\theta}
 $$
 for any $0 \le \theta \le 1$ and $x,y \in \RR^n$.   In particular, the indicator function of 
 a convex set is log-concave.  The following fundamental result
 is due to Pr\'ekopa  \cite[Theorem 6]{prekopa}.
 
 \begin{theorem}
 Suppose that the function $g$  on $\RR^{m+n}$ is log-concave.  Then the function $h$ on $\RR^n$
 defined by
 $$
 h(y) = \int_{\RR^m} g(x,y) dx
 $$
 is also log-concave.
 \end{theorem}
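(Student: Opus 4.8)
The plan is to derive this from the Pr\'ekopa--Leindler inequality: if $\theta \in [0,1]$ and $u,v,w$ are non-negative measurable functions on $\RR^m$ satisfying $w(\theta x + (1-\theta)x') \ge u(x)^\theta v(x')^{1-\theta}$ for all $x,x' \in \RR^m$, then $\int_{\RR^m} w \ge \big(\int_{\RR^m} u\big)^\theta \big(\int_{\RR^m} v\big)^{1-\theta}$. Granting this inequality, the theorem follows immediately: fix $y_0, y_1 \in \RR^n$ and $\theta \in [0,1]$ and set $y_\theta = \theta y_0 + (1-\theta) y_1$. Applying the log-concavity of $g$ to the pair of points $(x_0,y_0)$ and $(x_1,y_1)$ shows that the slices $u = g(\cdot, y_0)$, $v = g(\cdot, y_1)$ and $w = g(\cdot, y_\theta)$, viewed as functions on $\RR^m$, satisfy the Pr\'ekopa--Leindler hypothesis. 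Integrating over $x \in \RR^m$ then gives $h(y_\theta) \ge h(y_0)^\theta h(y_1)^{1-\theta}$, which is precisely log-concavity of $h$.

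It remains to prove the Pr\'ekopa--Leindler inequality, which I would do by induction on $m$. For the base case $m=1$, first dispose of the degenerate situations (if $\int u = 0$ or $\int v = 0$ the right-hand side vanishes; otherwise rescale $u$ and $v$ to have supremum $1$ and rescale $w$ by $\|u\|_\infty^\theta \|v\|_\infty^{1-\theta}$, which preserves the hypothesis). The crucial observation is the superlevel-set inclusion: for every $t \in (0,1)$ the hypothesis forces $\theta\{u \ge t\} + (1-\theta)\{v \ge t\} \subseteq \{w \ge t\}$, since a geometric mean of two numbers each $\ge t$ is again $\ge t$. The one-dimensional Brunn--Minkowski inequality (equivalently $|C+D| \ge |C|+|D|$ for sets on the line, with inner measure used if the Minkowski sum is not measurable) then yields $|\{w \ge t\}| \ge \theta |\{u \ge t\}| + (1-\theta)|\{v \ge t\}|$; integrating this over $t \in (0,1)$ via the layer-cake formula and applying the weighted arithmetic--geometric mean inequality gives $\int w \ge \theta \int u + (1-\theta)\int v \ge \big(\int u\big)^\theta\big(\int v\big)^{1-\theta}$.

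For the inductive step I would split $\RR^m = \RR^{m-1}\times \RR$ and apply the dimension-$(m-1)$ inequality to the slices at fixed last coordinate: for $s,s' \in \RR$ and $r = \theta s + (1-\theta)s'$, the functions $u(\cdot,s), v(\cdot,s'), w(\cdot,r)$ on $\RR^{m-1}$ satisfy the hypothesis, so their integrals $U(s), V(s'), W(r)$ obey $W(r) \ge U(s)^\theta V(s')^{1-\theta}$; applying the one-dimensional case to $U,V,W$ and invoking Fubini closes the induction. I expect the only genuine subtlety to be the one-dimensional Brunn--Minkowski inequality together with the careful handling of degenerate and non-measurability cases (and, in the statement itself, the convention that $h$ may take the value $+\infty$, under which the log-concavity inequality is to be read with the usual arithmetic on $[0,\infty]$); the conceptual heart of the matter is entirely the superlevel-set inclusion above.
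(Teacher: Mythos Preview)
Your argument is correct and is the standard modern route to Pr\'ekopa's theorem: reduce to the Pr\'ekopa--Leindler inequality by slicing in the $y$-variable, and prove Pr\'ekopa--Leindler by induction on the dimension with the one-dimensional case handled via the superlevel-set inclusion and one-dimensional Brunn--Minkowski. The normalisation to $\sup u = \sup v = 1$ is exactly what makes both superlevel sets nonempty for $t\in(0,1)$, so that the one-dimensional Brunn--Minkowski estimate applies and the layer-cake/AM--GM step goes through; your remark about handling infinite suprema and possible non-measurability of Minkowski sums by approximation covers the residual technicalities.

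As for the comparison: the paper does not actually prove this theorem. It is stated in the appendix purely as a quoted result of Pr\'ekopa, with a citation, and is then invoked in the proof of Theorem~\ref{th:main}. So there is no ``paper's own proof'' to compare against --- you have supplied a complete argument where the paper deliberately offers none.
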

In section \ref{se:proofs} we appeal to Pr\'ekopa's theorem with the log-concave function
$$
g(x,y) =  \tfrac{1}{(2\pi)^{n/2}} \one_{\{ (x,y):  x + py \in A, x-py \in B \}} e^{-\| x\|^2/2 }.  
$$


\begin{thebibliography}{9}


\bibitem{dar} S. Dar. A Brunn--Minkowski-Type inequality.
\textit{Geometriae Dedicata} \textbf{77}: 1–-9. (1999)


 \bibitem{KM} A.L. Koldobsky and S.J. Montgomery-Smith. Inequalities of correlation
 type for symmetric stable random vectors.  \textit{Statistics and Probability Letters}
 \textbf{28}: 91--97. (1996) 


\bibitem{LM} R. Lata\l{}a and D. Matlak. Royen's proof of the Gaussian correlation inequality.
\textit{Geometric Aspects of Functional Analysis: Israel Seminar (GAFA) 2014-2016}.
 Lecture Notes in Mathematics 2169. Springer:  265--275 (2017)

 \bibitem{li} W.V. Li.  A Gaussian correlation inequality and its applications to small ball probabilities.
 \textit{Electronic Communications in Probability} \textbf{4}: 111-118. (1999)
  

\bibitem{pitt} L.D. Pitt. A Gaussian correlation inequality for symmetric convex sets.
\textit{Annals of Probability} \textbf{5}(3): 470-474. (1977)

\bibitem{prekopa} A. Pr\'ekopa. On logarithmic concave measures and functions.
\textit{Acta Scientiarum Mathematicarum} \textbf{34}: 335-343.  (1973)


\bibitem{RS} C.A. Rogers and G.C. Shephard. Some extremal problems for convex bodies. \textit{Mathematika}
\textbf{5}: 93-–102.  (1958)

 
\bibitem{royen} Th. Royen. A simple proof of the Gaussian correlation conjecture extended to multivariate gamma distributions. \textit{Far East Journal of Theoretical Statistics} \textbf{48}(2): 139--145. (2014)


\bibitem{SSZ} G. Schechtman, Th. Schlumprecht and J. Zinn.  On the Gaussian measure of the intersection.
\textit{Annals of Probability} \textbf{26}(1): 346-357.  (1998) 

\bibitem{shao} Q.M. Shao.  A Gaussian correlation inequality and its
applications to the existence of small
ball constant.  \textit{Stochastic Processes and their Applications} \textbf{107}: 269--287. (2003) 
\end{thebibliography}
\end{document}